\documentclass[10pt]{amsart}
\usepackage{amssymb}

\newtheorem{theorem}{Theorem}

\newtheorem{corollary}[theorem]{Corollary}

\newtheorem{lemma}[theorem]{Lemma}

\newcommand{\R}{{\mathbb R}}

\begin{document}

{

\title{Inscribed squares of level sets of functions on the sphere}

\author{V.A.~Vassiliev}
\address{Weizmann Institute of Science, Rehovot, Israel}
 \email{vavassiliev@gmail.com}
\subjclass{Primary: 55R80. Secondary: 05B30}

\begin{abstract}
According to a theorem by F.~Dyson, every continuous function $f: S^2 \to {\mathbb R}$ takes the same value at the vertices of a square inscribed into a great circle of $S^2$. We give a proof of this statement based on the theory of characteristic classes and indicate other potential applications of this approach.

Keywords: level set, configuration space, Stiefel--Whitney class, inscribed squares, Borsuk-Ulam type theorems
\end{abstract} 
\thanks{}
\maketitle

Consider the unit sphere $S^2 \subset {\mathbb R}^3 $ defined by the equation 
\begin{equation}
\label{sphere}
x^2 + y^2 + z^2 =1. 
\end{equation}
The {\em great circles} are its intersections with the two-dimensional subspaces in ${\mathbb R}^3$.

\begin{theorem}[see \cite{d}]
\label{mthm}
For any continuous function $f: S^2 \to {\mathbb R}$, there exists a square in ${\mathbb R}^3$ inscribed into a great circle of $S^2$, such that $f$ takes equal values at all its vertices.
\end{theorem}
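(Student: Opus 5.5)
The plan is to recast the problem as the non-vanishing of a characteristic class of a vector bundle over the configuration space of inscribed squares. Let $M$ be the space of all squares inscribed into great circles of $S^2$. Such a square is determined by an orthonormal frame $(e_1,e_2)$ in $\R^3$: its vertices are $\pm e_1, \pm e_2$. Two frames give the same square if they differ by the action of the dihedral group $D_4$ of order 8, which permutes the four vertices through the symmetries of the square. Hence $M = V_2(\R^3)/D_4 = SO(3)/D_4$, a closed 3-manifold.

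Over $M$ consider the bundle $E$ whose fiber over a square with vertices $v_1,\dots,v_4$ is the 3-dimensional space of functions $\{v_1,\dots,v_4\}\to\R$ modulo constants. Equivalently, $E$ is associated to the $D_4$-covering $SO(3)\to M$ via the reduced permutation representation $\rho$ of $D_4$ on $\R^4/\R\cdot(1,1,1,1)$. A continuous $f$ gives a section $s_f$ of $E$, namely the restriction of $f$ to the vertices, taken mod constants. Zeros of $s_f$ are exactly the squares on which $f$ is constant. If $w_3(E)\neq 0$ in $H^3(M;\mathbb Z_2)\cong\mathbb Z_2$, then every section vanishes somewhere and the theorem follows. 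Equivalently, I would show that a generic section has an odd number of zeros.

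To compute $w_3(E)$ I see two routes. The first is to decompose $\rho$: the reduced permutation representation of $D_4$ on the vertices splits as $\rho = \lambda\oplus\sigma$. Here $\sigma$ is the 2-dimensional standard representation, spanned by the functions $\pm1$ on opposite vertices in the two directions. The summand $\lambda$ is the 1-dimensional sign character alternating around the square, that is, $+1$ on $\pm e_1$ and $-1$ on $\pm e_2$. Then $w(E)=w(L_\lambda)\,w(E_\sigma)$, and I need the cohomology ring $H^*(SO(3)/D_4;\mathbb Z_2)$ together with the classes of the three nontrivial characters of $D_4/[D_4,D_4]\cong \mathbb Z_2^2$ in $H^1$. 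This is computable from the Serre spectral sequence of $SO(3)\to SO(3)/D_4\to BD_4$, or from a description of $M$ as a quotient of $S^3$ by the binary dihedral group of order 16, which makes $M$ a prism manifold. The second route, which I would actually try first because it is more robust, is to pick an explicit test function, for instance $f=z^2$ or a small generic perturbation of a simple polynomial such as $f = x + \varepsilon y^2 z$, and count the squares where it is constant, checking transversality. For $f$ linear, $f(v)=\langle a,v\rangle$, the condition forces $\langle a,e_1\rangle=\langle a,-e_1\rangle$, so the plane of the square is orthogonal to $a$. This gives a degenerate circle of zeros, which must then be perturbed.

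The main obstacle is this computation: either getting the ring structure of $H^*(SO(3)/D_4;\mathbb Z_2)$ and the identification of the characteristic classes right, or performing a clean transversal zero count for a concrete $f$. For the count route, I would start with $f$ linear plus a small perturbation $\varepsilon g$. Restricting to the circle of degenerate zeros, I would reduce to counting zeros of an induced section of the normal-type bundle along that circle, via a Morse–Bott type argument, and aim to show that the count is odd. Since the count mod 2 is independent of the generic section, any one transversal example with an odd number of zeros proves $w_3(E)\ne0$, and hence the theorem.
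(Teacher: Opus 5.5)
\textbf{There is a genuine gap.} Your setup is the paper's: the 3-manifold $M$ of inscribed squares, the reduced regular bundle $E$ over it, the evaluation section $s_f$, and the reduction of the theorem to $w_3(E)[M]\neq 0$, certified by one test function with an odd number of transversal zeros. But that certification is the only non-formal step, and you never carry it out. Neither concrete candidate you name can supply it.

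The reason is already visible in your splitting $E=L_\lambda\oplus E_\sigma$. Under $v\mapsto -v$, the even part of $f$ contributes only to $L_\lambda$ and the odd part only to $E_\sigma$. So for a function of pure parity, $s_f$ lies in a proper subbundle, and its differential at a zero can never be onto the 3-dimensional fiber.
\begin{itemize}
\item $f=z^2$ is even. Its zero set $\{z(e_1)^2=z(e_2)^2\}$ is two-dimensional: it contains the whole circle of squares in $\{z=0\}$ and one square in every other plane.
\item $f=x+\varepsilon y^2z$ is odd, since both terms are. Its zeros are the squares with all four vertices on the curve $\{f=0\}\cap S^2$, which form a one-dimensional family.
\end{itemize}
The same shows up in your Morse--Bott picture around the circle $C$ of squares in $\{x=0\}$. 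There the cokernel line bundle is $L_\lambda|_C$, and the induced section is the $L_\lambda$-component of the perturbation, which vanishes identically when the perturbation is odd.

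The missing idea is to mix parities deliberately. You need an odd part that pins the square to a single great circle, and an even part with exactly one zero on the corresponding circle of squares. The paper takes $f_0=z-xy$.
\begin{itemize}
\item Since $-v$ is a vertex whenever $v$ is, the condition $f_0(v)=f_0(-v)$ forces $z=0$ at every vertex. So the square lies in the equator.
\item On the vertices $(\cos\theta,\sin\theta,0),(-\sin\theta,\cos\theta,0),\dots$ the function $xy$ takes the values $\pm\tfrac12\sin 2\theta$. Constancy forces $\theta\equiv 0\pmod{\pi/2}$, so the only zero is the square $\{(\pm1,0,0),(0,\pm1,0)\}$.
\item A Jacobian computation in local coordinates shows this zero is transversal. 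The coordinates are the tilts $\alpha,\beta$ of the plane $z=\alpha x+\beta y$ and the rotation of the square within it. Hence $w_3(E)[M]=1$.
\end{itemize}

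Your own scheme gives the same result if you use an even perturbation, for example $f=x+\varepsilon yz$. The zeros are then exactly the squares in $\{x=0\}$ on which $\tfrac{\varepsilon}{2}\sin 2\theta=0$, i.e.\ a single square. In the notation of your first route this says $w_1(L_\lambda)\,w_2(E_\sigma)[M]=1$, and you never need the ring $H^*(SO(3)/D_4;\mathbb Z_2)$.
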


An alternative proof of this theorem occupies almost the entire rest of the article.
\medskip

 Consider the configuration  space $B(S^2, 4)$, whose points are subsets of cardinality four in $S^2$. The {\em regular bundle} $E$ on $B(S^2, 4)$ is the four-dimensional vector bundle whose fiber over a set of cardinality four is the space of all real-valued functions on that set. This bundle is reducible: it contains a one-dimensional trivial subbundle whose fibers are the constant functions. Denote the three-dimensional quotient bundle of $E$ through this one-dimensional subbundle   by $\tilde E$. 

The mod 2 Euler class of this quotient bundle, $w_3(\tilde E) \equiv w_3(E) \in H^3(B(S^2,4), {\mathbb Z}_2)$,  is non-trivial: see, for example, \cite{fuks}, where this is proved for the restriction of the bundle $E$ (and hence also of $\tilde E$) to the subspace  $B({\mathbb R}^2, 4) \subset B(S^2, 4)$.

For each continuous function $f: S^2 \to {\mathbb R},$ the corresponding {\em evaluation cross-section} of the bundle $E$ is defined by the restrictions of $f$ to the four-configurations.  Therefore, a continuous cross-section of the quotient bundle $\tilde E$ is also defined by factoring this cross-section through the trivial subbundle. The intersection points of this cross-section with the zero section of the bundle $\tilde E$ are exactly the quadruples of points at which the function $f$ has the same value.

\begin{lemma}
\label{lem0}
Let $U$ be a subset of $B(S^2,4)$. If there exists a function $f: S^2 \to {\mathbb R}$ that 
takes at least two different values at the points of any four-configuration from $ U$, then the restriction of the class $w_3(\tilde E)$ to the space $U$ is the zero element of the group $H^3(U, {\mathbb Z}_2)$.
\end{lemma}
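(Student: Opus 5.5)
The plan is to use the standard fact that the top Stiefel--Whitney class of a real vector bundle vanishes as soon as the bundle has a nowhere-zero continuous section. Here $\tilde E$ has rank three, so $w_3(\tilde E)$ is its top (mod 2 Euler) class. It therefore suffices to produce a nowhere-vanishing section of the restriction $\tilde E|_U$.

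\textbf{Step 1: the section.} Take the evaluation cross-section $s_f$ of $E$ defined by $f$, restrict it to $U$, and project it to the quotient bundle $\tilde E$. Call the result $\tilde s_f$. It is continuous because $s_f$ is: in local trivializations of $B(S^2,4)$ the four points move continuously and $f$ is continuous.

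\textbf{Step 2: no zeros.} At a configuration $\{a_1,\dots,a_4\}\in U$, the vector $\tilde s_f$ vanishes exactly when $f|_{\{a_1,\dots,a_4\}}$ is a constant function. By hypothesis $f$ takes at least two different values on every configuration of $U$, so $\tilde s_f$ has no zeros on $U$.

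\textbf{Step 3: splitting off a line.} Choose a metric on $\tilde E$; for instance, on $E$ take the standard metric $\sum_i g(a_i)^2$ on functions on a finite set, and identify $\tilde E$ with the orthogonal complement of the constants. The normalized section $\tilde s_f/|\tilde s_f|$ spans a trivial line subbundle $L$ of $\tilde E|_U$. Hence
\[
\tilde E|_U\cong L\oplus L^{\perp},\qquad L^{\perp}\ \text{of rank } 2 .
\]

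\textbf{Step 4: conclusion.} By the Whitney sum formula, $w(\tilde E|_U)=w(L)\,w(L^\perp)=w(L^\perp)$, and $w_3(L^\perp)=0$ because $L^\perp$ has rank $2$. By naturality of Stiefel--Whitney classes under the inclusion $U\hookrightarrow B(S^2,4)$, the restriction of $w_3(\tilde E)$ to $U$ equals $w_3(\tilde E|_U)$, which is therefore $0$ in $H^3(U,\mathbb Z_2)$.

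\textbf{Main obstacle.} The only point needing care is the generality of $U$, which is an arbitrary subset and possibly not paracompact or CW. This does not hurt the argument. $B(S^2,4)$ is a metrizable manifold, so every subspace $U$ is metrizable and hence paracompact. Consequently Stiefel--Whitney classes of bundles over $U$ are defined via classifying maps (in Čech cohomology, which agrees with singular cohomology for open or otherwise nice $U$), and the splitting in Step 3 holds on $U$.

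Alternatively, one can bypass the splitting. The nonvanishing section gives a map of $\tilde E|_U$ into a bundle pulled back from $BO(2)\times BO(1)$ with trivial line factor, and this lifts the classifying map of $\tilde E|_U$ through $BO(2)$. Since $H^3$ of the universal class satisfies $w_3=0$ on $BO(2)$, the class $w_3(\tilde E)|_U$ vanishes.
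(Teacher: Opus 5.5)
Your argument is correct and is essentially the paper's proof: the paper notes that the evaluation section of $\tilde E$ is nowhere zero on $U$ and then cites Proposition 4 of Section 4 in Milnor--Stasheff. That proposition says the top Stiefel--Whitney class of a bundle with a nowhere-zero section vanishes, and its standard proof is exactly your argument (split off a trivial line, then apply the Whitney sum formula). For a general subset $U$, a simpler route than your paracompactness discussion is to note that the section is nonzero on an open submanifold $V\supset U$ of $B(S^2,4)$, prove vanishing on $V$, and restrict to $U$.
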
  

\noindent
{\it Proof.} In this case, the evaluation cross-section defined by $f$ is non-zero everywhere on $U$. Our statement now follows from  Proposition 4 in Section 4 of \cite{MS}. \hfill $\Box$
\medskip

Consider the manifold $M^3$ of all squares inscribed into great circles of $S^2$. It is a compact orientable three-dimensional manifold with the obvious structure of a fiber bundle over the space ${\mathbb R}P^2$ of all great circles, with the fiber $S^1/{\mathbb Z}_4 \simeq S^1$. 
In particular, $H_3(M^3, {\mathbb Z}_2) \simeq {\mathbb Z}_2$.

Consider the restriction of the function 
\begin{equation}
\label{fun0}
f_0 = z - x y
\end{equation}
to $S^2$. There are two connected submanifolds in the eleven-dimensional total space of the vector bundle $\tilde E$: the three-dimensional manifold, which is the image of the evaluation cross-section on $M^3$ defined by the function (\ref{fun0}), and the eight-dimensional  zero section of the entire vector bundle. The former submanifold is compact, and the latter submanifold is closed in the total space of the bundle $\tilde E$, in particular it defines a Borel--Moore homology class of this space.  Therefore, the mod 2 intersection index of these two manifolds is well-defined.

\begin{lemma}
\label{lem2}
The mod 2 intersection index of these two manifolds is non-zero.
\end{lemma}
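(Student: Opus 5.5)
Our plan is to compute the intersection index directly, by finding all squares in $M^3$ on whose vertices $f_0=z-xy$ takes equal values and checking that each such intersection is transversal. The index is then the number of these squares mod 2, and we expect that number to be odd.

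\textbf{Step 1: parametrize squares.} A square inscribed into a great circle has vertices $\pm u,\pm v$, where $u,v$ is an orthonormal pair in $\R^3$; the pair is determined by the square up to the action of $\mathbb Z_4$. Since $xy$ is even under $p\mapsto -p$ and $z$ is odd, the values at the vertices are
\[
f_0(\pm u)=\pm u_3-u_1u_2,\qquad f_0(\pm v)=\pm v_3-v_1v_2 .
\]
All four values coincide exactly when $u_3=v_3=0$ and $u_1u_2=v_1v_2$. The first condition says the great circle is the equator $z=0$. Write $u=(\cos\varphi,\sin\varphi,0)$ and $v=(-\sin\varphi,\cos\varphi,0)$. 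The second condition becomes $\sin 2\varphi=-\sin 2\varphi$, i.e.\ $\sin 2\varphi=0$. Modulo the $\mathbb Z_4$ rotation by $\pi/2$, this leaves exactly one square, namely the one with vertices $(\pm1,0,0),(0,\pm1,0)$. So the two manifolds meet in a single point.

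\textbf{Step 2: transversality.} The intersection is transversal if and only if the section of $\tilde E|_{M^3}$ has nondegenerate zero there. In other words, the differential of the map
\[
M^3\to\R^3,\qquad \text{square}\mapsto\bigl(f_0(v_1)-f_0(v_4),\,f_0(v_2)-f_0(v_4),\,f_0(v_3)-f_0(v_4)\bigr)
\]
should be invertible at that square, where $v_1,\dots,v_4$ are the vertices in a chosen local order. Local coordinates on $M^3$ near the zero are two tilt parameters of the plane together with the rotation angle $\varphi$.

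Tilt the plane by small $a,b$ so that the rotated frame has third coordinates $u_3\approx a$ and $v_3\approx b$. The vertex values then become, to first order,
\[
\pm a-\tfrac12\sin2\varphi,\qquad \pm b+\tfrac12\sin2\varphi .
\]
The differences from the value at $-v$ are $(a-\delta,\,-a-\delta,\,2b)$ up to sign conventions, where $\delta=\sin2\varphi\approx 2\varphi$. These three expressions are linearly independent in $(a,b,\varphi)$, so the Jacobian is nonsingular.

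\textbf{Step 3: conclusion.} There is exactly one intersection point and it is transversal, so the mod 2 intersection index equals $1$.

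The main obstacle is making Step 2 rigorous. The first-order expansions must be justified in honest local charts of $M^3$ and of the total space of $\tilde E$, since the quotient identification by constants and the ordering of the vertices have to be consistent. Checking that the orthonormalization introduces no extra first-order terms is routine, because the corrections it introduces are of second order in $a,b$.
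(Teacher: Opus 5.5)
Your proposal is correct and follows essentially the same route as the paper: central symmetry forces $u_3=v_3=0$, which leaves the single square with vertices $(\pm1,0,0),(0,\pm1,0)$, and transversality is then checked by a nonsingular Jacobian in the coordinates (two tilt parameters, rotation angle), which is what the paper's $4\times 4$ matrix encodes. One small slip: the differences from $f_0(-v)$ are actually $(a+b-\delta,\,-a+b-\delta,\,2b)$, not $(a-\delta,\,-a-\delta,\,2b)$, but both triples span the same space of linear forms in $(a,b,\varphi)$, so nondegeneracy still holds.
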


\noindent
{\it Proof.} 
We will identify the zero section of the vector bundle $\tilde E$ with the base space $B(S^2,4)$. 
The intersection points in question are the sets of vertices of inscribed squares, such that all of these vertices simultaneously satisfy an equation of the form $z -x y =c$ for some constant $c$. As each inscribed square is centrally symmetric, the equation $(-z) - (-x)(-y) =c$ is also satisfied by these vertices, and hence the square lies in the plane $\{z=0\}$. The set of points of this plane that satisfy the equations $x y = c$ and $x^2 + y^2 =1$ contains a square if and only if $c=0$. The coordinates of the vertices of the square are then equal to 
\begin{equation}
\label{co3}
(1, 0), \ (0, 1), \ (-1, 0), \ \mbox{and} \ (0,-1). 
\end{equation} 
It remains to be proven that this four-configuration contributes 1 to the intersection index. 

Each two-subspace of ${\mathbb R}^3$ near the plane $\{z=0\}$ has the equation $z = \alpha x + \beta y$. Thus, for the local coordinates of the manifold $M^3$ near the
 configuration (\ref{co3}) we can take the coefficients $\alpha$ and $\beta$ defining the circle, and also the coordinate $x$ of the point of the ${\mathbb Z}_4$-invariant four-configuration in this circle, which is close to the point $(1,0)$ of the four-configuration (\ref{co3}). The partial derivatives of the values of $f_0$ at the four points (\ref{co3}) with respect to  these three local coordinates $\alpha, \beta,$ and $x$ form the upper three rows of the matrix
$$\begin{pmatrix}
1 & 0 & -1 & 0 \\
0 & 1 & 0 & -1 \\
-1 & 1 & -1 & 1 \\
1 & 1 & 1 & 1
\end{pmatrix} .
$$
This matrix is non-degenerate, therefore the evaluation cross-section of the bundle $\tilde E$ over $M^3$ defined by the function (\ref{fun0})  intersects the zero section of the bundle $\tilde E$  transversally at the point defined by the configuration (\ref{co3}). \hfill $\Box$

\begin{corollary}
\label{cor0}
The class $w_3(\tilde E)$ takes the non-zero value on the fundamental class of the manifold $M^3$.  
\end{corollary}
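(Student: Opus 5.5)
The plan is to pull the bundle $\tilde E$ back along the natural map $\iota: M^3 \to B(S^2,4)$, which sends an inscribed square to its set of four vertices. By naturality of Stiefel--Whitney classes, $\iota^* w_3(\tilde E) = w_3(\iota^*\tilde E)$. Evaluating the class on the fundamental class $[M^3] \in H_3(M^3,{\mathbb Z}_2) \simeq {\mathbb Z}_2$ then reduces to computing the top mod 2 Euler class of the rank-three bundle $\iota^*\tilde E$ over the closed three-manifold $M^3$.

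The key step is the standard fact that, for a rank-$n$ vector bundle over a closed $n$-manifold, the top Stiefel--Whitney class evaluated on the mod 2 fundamental class equals the mod 2 number of zeros of any cross-section transverse to the zero section. Equivalently, it is the mod 2 intersection index of the image of the section with the zero section. This is the mod 2 Euler class interpreted as the Poincaré dual of the zero locus (cf.\ \cite{MS}, the mod 2 reduction of the Euler class and its description via generic sections).

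We apply this to the cross-section of $\iota^*\tilde E$ induced by the evaluation section of $f_0 = z - xy$. Lemma \ref{lem2} says that this section has exactly one zero on $M^3$, namely the square (\ref{co3}), and that the zero is transverse. So the intersection index is $1 \in {\mathbb Z}_2$, and hence $\langle w_3(\tilde E), \iota_*[M^3]\rangle = 1$.

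The main point to check is that the intersection index computed in Lemma \ref{lem2} is computed in the total space of $\tilde E$ over all of $B(S^2,4)$, whereas what we need is the index in the pulled-back bundle over $M^3$. These agree. The image of the section over $M^3$ meets the zero section only over points of $\iota(M^3)$. The transversality established there is exactly the nondegeneracy of the derivative of the section, viewed as a map from the three-dimensional tangent space of $M^3$ to the three-dimensional fiber of $\tilde E$. This is precisely transversality of the section of $\iota^*\tilde E$ to its zero section. One should also note that $\iota$ is an embedding, since distinct squares have distinct vertex sets, although this is not essential. With that identification, the corollary follows.
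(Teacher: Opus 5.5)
Your proposal is correct and takes essentially the paper's route: both arguments rest on the single transverse zero from Lemma~\ref{lem2}, and both identify the value of the mod 2 Euler class $w_3$ on $M^3$ with the mod 2 intersection count of a section with the zero section. The only difference is that the paper proves this identification directly instead of citing it: the image of $M^3$ represents the class of a fiber in $H_3(\tilde E, \tilde E_0; {\mathbb Z}_2)$, and the paper evaluates the Thom class $u$ on it.
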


\noindent
{\it Proof.} Let $\tilde E_0$ be the space of the bundle $\tilde E$ less the zero section. According to Lemma \ref{lem2}, the homology class of the manifold $M^3$ in the group $H_3(\tilde E, \tilde E_0; {\mathbb Z}_2)$ coincides with the homology class of an arbitrary fiber of the bundle $\tilde E$. The ``orientation'' class $u \in H^3(\tilde E, \tilde E_0; {\mathbb Z}_2)$ (see \cite{MS}, \S 9) takes the non-zero value on the class of the fiber, and hence also on $M^3$. By the definition of the mod 2 Euler class (see {\it ibid}), this value is equal to the value of the class $w_3(\tilde E)$ on $M^3$. \hfill $\Box$
\medskip

Theorem \ref{mthm} follows immediately from comparing Lemma \ref{lem0} and Corollary \ref{cor0}. \hfill $\Box$

\medskip
\noindent
{\bf Similar problems.}
Is it true that any continuous map $S^5 \to \R^3$ takes equal values at some five vertices of an octahedron inscribed into a great two-dimensional subsphere of $S^5$?

Is it true that  any continuous map $S^5 \to \R^2$ takes equal values at all vertices of a hexadecachoron inscribed into a great three-dimensional subsphere of $S^5$?

\medskip
I thank Chris Gerig very much for drawing my attention to the work of F.~Dyson.

}


\begin{thebibliography}{9}

\bibitem{d} F.J.~Dyson, {\it Continuous functions defined on spheres. Annals of Mathematics} 54(3), 1951, 534-536.

\bibitem{fuks} D.B.~Fuchs, {\it Cohomologies of the braid group mod 2,} Funct. Anal. Appl., 4:2 (1970), 143—151.

\bibitem{MS}  J.W.~Milnor, J.D.~Stasheff; {\it Characteristic classes.}
Princeton Univ. Press and Univ. of Tokyo Press. Princeton, New Jersey, 1974.



\end{thebibliography}
\end{document}